\newcommand{\real}{\mathbb{R}}
\newcommand{\cP}{\mathcal{P}}
\newtheorem{Theorem}{Theorem}
\newtheorem{Lemma}[Theorem]{Lemma}
\theoremstyle{remark}
\newtheorem{Remark}[Theorem]{Remark}
\begin{document}

\title[partially hyperbolic endomorphisms]{On bifurcation of statistical properties of partially hyperbolic endomorphisms}
\author{Masato Tsujii}
\thanks{MT is supported by JSPS KAKENHI Grant Number 21H00994.}
\address{Department of Mathematics, Kyushu university, 744 Motooka, Nishi-ku, Fukuoka, 819-0395, JAPAN}
\author{Zhiyuan Zhang}
\address{CNRS, Institut Galil\'ee Universitat Paris 13,  99, Avenue Jean-Baptiste cl\'ement 93430 - Villetaneuse, FRANCE}

\begin{abstract}
We give an example of a path-wise connected  open set of $C^\infty$ partially hyperbolic endomorphisms on the $2$-torus, on which the SRB measure exists for each system and varies smoothly depending on the system, while the sign of its central Lyapunov exponent does change.  
\end{abstract}

\maketitle

\section{Introduction}
We give an example of a path-wise connected  open set of $C^\infty$ partially hyperbolic endomorphisms\footnote{We refer \cite{MR2231338} for a few technical terms that are used in this paper such as partially hyperbolic endomorphism, Lyapunov exponents, etc.} on the $2$-torus, on which the SRB measure exists for each system and varies smoothly depending on the system, while the sign of its central Lyapunov exponent does change.  Since the Lyapunov exponents of the SRB measure are major characteristics that describe local geometric structure of dynamics almost everywhere, we tend to think that the dynamical systems exhibit drastic bifurcations when the Lyapunov exponents of the SRB measure change their signs. However our example tells that this is not always the case and suggests that  bifurcations of global statistical properties of partially hyperbolic dynamical systems  may be much milder than  what we expect  from our knowledge on low dimensional dynamical systems with one dimension of unstability such as H\'enon maps.

The open subset of $C^\infty$ partially hyperbolic endomorphisms in our example is a small open neighborhood of a one-parameter family of skew products of circle endomorphisms over an angle-multiplying map. 
Smooth dependence of the SRB measure  in a similar setting is already studied in \cite{MR3772032} partly based on the argument in \cite{AGT,GouezelLiverani06}. In this paper, we construct an example in which we can observe the switching of the sign for the central Lyapunov exponent of the SRB measure. In the last section, we present some results of numerical computations that illustrate the situation in our example.

\section{Result}
We write $\mathbb{T}=\real/\mathbb{Z}$ for the unit circle and $\mathbb{T}^2$ for 2-dimensional torus. 
We consider the iteration of a $C^\infty$ locally diffeomorphic  map $F:\mathbb{T}^2\to \mathbb{T}^2$ as a discrete dynamical system. The Perron-Frobenius operator 
\begin{equation}\label{eq:PF}
\cP:C^r(\mathbb{T}^2)\to C^r(\mathbb{T}^2), \quad 
\cP u(p)= \sum_{p'\in \mathbb{T}^2: F(p')=(p)}
\frac{u(p')}{|\det DF(p')|} 
\end{equation}
expresses  the action of $F$ on the space of densities, where $C^r(\mathbb{T}^2)$ denotes the space of $C^r$ functions on  $\mathbb{T}^2$.

An invariant Borel probability measure $\mu$ is said to be an  SRB measure if almost every point on $\mathbb{T}^2$ with respect to the Lebesgue measure  is generic for $\mu$. 
We consider a partially hyperbolic endomorphism $F$ on $\mathbb{T}^2$ and suppose that $F$ admits an ergodic SRB measure $\mu_F$. Then the Lyapunov exponents take constant values  
\[
\chi^c(\mu_F)\lneq \chi^u(\mu_F)\quad\text{with}\quad 
\chi^u(\mu_F)>0
\]
at almost every point with respect to $\mu_F$ and also with respect to the Lebesgue measure. 

Our main result is stated as follows.
\begin{Theorem}\label{th:main}
For any $r>0$, there exists a path-wise connected $C^\infty$ open subset $\mathcal{U}$ of $C^\infty(\mathbb{T}^2,\mathbb{T}^2)$ that consists of locally diffeomorphic partially hyperbolic endomorphisms,   a Hilbert space
\begin{equation}\label{eq:H}
C^\infty(\mathbb{T}^2)\subset \mathcal{H}\subset C^r(\mathbb{T}^2)
\end{equation}
and a constant $0<\rho<1$  such that 
\begin{itemize}
\item[(a)] The Perron-Frobenius operator $\mathcal{P}_F$ for $F\in \mathcal{U}$ restricts to a bounded operator
\begin{equation}\label{cpf}
\cP_F:\mathcal{H}\to \mathcal{H}.
\end{equation}
\item[(b)]
The restriction \eqref{cpf} has a simple eigenvalue $1$ and the rest of its spectral set is contained in the disk $|z|<\rho<1$.
\item[(c)] $F\in \mathcal{U}$ admits a unique SRB measure $\mu_F=\rho_F \mathrm{Leb}$ where $\rho_F\in \mathcal{H}$ is the eigenfunction of $\cP_F$ for the simple eigenvalue~$1$.
\item[(d)] The SRB measure $\mu_F$ depends on $F\in \mathcal{U}$ smoothly in the sense that, for any $C^\infty$ one-parameter family $G_t$ of maps in $\mathcal{U}$ and $\psi\in C^\infty(\mathbb{T}^2)$, the correspodence $t\mapsto \int \psi d\mu_{G_t}$ is $C^r$.  
\item[(e)]  There are $F_{\sigma}\in \mathcal{U}$ for $\sigma\in \{+,-\}$ such that the central Lyapunov exponent
$\chi^c(\mu_{F_\sigma})$ has the same sign as $\sigma$.  \end{itemize}
\end{Theorem}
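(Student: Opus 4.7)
The plan is to construct $\mathcal{U}$ as a small $C^\infty$ neighborhood of a one-parameter family of skew products
\[
F_t(x,y) = (\ell x,\; k y + \phi_t(x,y)), \qquad t\in[0,1],
\]
over the angle-multiplying base map $x\mapsto \ell x$, where $\ell\gg 1$ and $k\geq 1$ are integers and $\phi_t\in C^\infty(\mathbb{T}^2)$ is periodic in $y$. Partial hyperbolicity is built in: choosing $\ell$ much larger than $\|k+\partial_y\phi_t\|_\infty$ and $\|(k+\partial_y\phi_t)^{-1}\|_\infty$ gives a uniform dominated splitting $T\mathbb{T}^2=E^u\oplus E^c$ in a whole $C^1$ neighborhood, so that $\mathcal{U}$ can be taken as a $C^\infty$ neighborhood of $\{F_t\}_{t\in[0,1]}$. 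The function $\phi_t$ will be designed so that the fiber multiplier $k+\partial_y\phi_t$ lies well above $1$ on a large part of $\mathbb{T}^2$ for $t$ near $0$, and is strongly contracting on a large part of $\mathbb{T}^2$ for $t$ near $1$.

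For (a)--(c) I would take $\mathcal{H}$ to be an anisotropic Hilbert space of the form
\[
\mathcal{H}=H^{s}_x\bigl(\mathbb{T},\, H^{-s'}_y(\mathbb{T})\bigr),\qquad s>r+s',\; s'>0,
\]
with high positive Sobolev regularity in the unstable direction $x$ and mild negative regularity in the central direction $y$; this contains $C^\infty(\mathbb{T}^2)$ and embeds continuously in $C^r(\mathbb{T}^2)$ when $s,s'$ are properly chosen. The transfer operator of a skew product decomposes nicely on $\mathcal{H}$ via Fourier series in $y$: each Fourier mode produces a twisted transfer operator over the base $x\mapsto \ell x$, and summation/contraction in these modes is controlled because the base expansion $\ell$ dominates all central expansion/contraction rates. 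A Lasota--Yorke type inequality
\[
\|\mathcal{P}_F^n u\|_{\mathcal{H}}\le C\theta^n\|u\|_{\mathcal{H}}+C_n\|u\|_{\mathcal{H}_0},\qquad \theta<1,
\]
on a fixed neighborhood $\mathcal{U}$, combined with compactness of $\mathcal{H}\hookrightarrow\mathcal{H}_0$, then gives quasi-compactness and hence (a). Positivity of $\mathcal{P}_F$ on real functions and topological mixing of the skew product (automatic when $\ell,k\geq 2$ and the perturbation is small) yield that $1$ is a simple isolated eigenvalue, establishing (b); the eigenfunction is the density $\rho_F$ of the SRB measure, giving (c).

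Item (d) is then a direct application of the perturbation theorem of Gouëzel--Liverani used in \cite{AGT,MR3772032}: once $\mathcal{P}_F$ has a uniform spectral gap on $\mathcal{H}$ and the map $F\mapsto \mathcal{P}_F$ is $C^r$ as an operator between $\mathcal{H}$ and a weaker auxiliary space in the standard ``triple of spaces'' framework, the top eigenprojector depends on $F$ as a $C^r$ object, and (d) follows by integrating $\psi$ against the eigendensity. For (e), I would exploit that $\mu_{F_t}$ has density in $\mathcal{H}\subset C^r$ and hence close to Lebesgue when $\phi_t$ is small. Choosing $\phi_0$ so that $k+\partial_y\phi_0>1$ everywhere forces $\chi^c(\mu_{F_0})>0$; choosing $\phi_1$ so that $\int\log|k+\partial_y\phi_1(x,y)|\,dx\,dy<0$ (made large by concentrating a strong contractive region of the fiber map in $y$ while keeping the density $\rho_{F_1}$ uniformly comparable to Lebesgue through the domination $\ell\gg 1$) forces $\chi^c(\mu_{F_1})<0$. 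Along the path $t\mapsto F_t$, $\chi^c(\mu_{F_t})$ varies continuously by (d), and passes through zero.

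The principal obstacle is the uniformity of the spectral gap on a \emph{single} Hilbert space $\mathcal{H}$ as the central Lyapunov exponent sweeps through zero. In the usual anisotropic Banach space framework for partially hyperbolic diffeomorphisms, one's norm is adapted to the stable/unstable splitting, and the Lasota--Yorke constants degenerate as the hyperbolic rates collapse. Here one must instead exploit a feature specific to partially hyperbolic \emph{endomorphisms}: the spectral contraction comes entirely from $|\det DF|^{-1}\cdot(\text{expansion rate in }E^u)$, which is uniformly below $1$ as soon as $\ell$ is large, independently of the sign of $\chi^c$. Turning this intuition into a quantitative Lasota--Yorke estimate on $\mathcal{H}$, uniform over $\mathcal{U}$, with the $C^r$ dependence $F\mapsto\mathcal{P}_F$ required for (d), is the technical heart of the argument and will require a careful calibration of the Sobolev indices $s,s'$ against the constants $\ell,k,\|\phi_t\|_{C^{s+1}}$.
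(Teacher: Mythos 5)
Your approach to (a)--(d) is in the same spirit as the paper's: an anisotropic Hilbert space adapted to the skew-product structure, a Lasota--Yorke inequality giving quasi-compactness uniformly on a neighborhood of the family, and the Gou\"ezel--Liverani perturbation theorem for the smooth dependence. The paper obtains the Lasota--Yorke estimate from a transversality condition (the quantity $\mathbf{m}(F)\to 0$ as the base degree $m\to\infty$) following \cite{MR3772032}, and remarks that your idea of Fourier-decomposing in $y$ works cleanly only when the fiber maps are rotations; for nonlinear fiber maps one instead passes to local charts and partitions of unity. That is a surmountable technical difference.

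The substantive gaps are in (b)/(c) and in the negative-exponent half of (e), and they are linked. You propose to get $\chi^c(\mu_{F_1})<0$ by choosing $\phi_1$ so that $\int\log|k+\partial_y\phi_1|\,dx\,dy<0$ and then asserting that $\rho_{F_1}$ stays ``uniformly comparable to Lebesgue through the domination $\ell\gg 1$.'' That assertion is false in general and no mechanism is offered for it. Domination $\ell\gg 1$ gives you the spectral gap on $\mathcal{H}$ and hence that $\rho_F\in C^r$, but it gives no lower bound on $\rho_F$. If, in order to make the Lebesgue average of $\log$(fiber derivative) negative, you create a region where the fiber maps are strongly contracting and this region is (even approximately) common across $x$, you produce a forward-invariant strip $W$ which absorbs Lebesgue-a.e.\ orbit; then $\mu_{F_1}$ concentrates on $\overline{W}$, $\rho_{F_1}$ vanishes off it, and the Lyapunov exponent must be computed against $\mu_{F_1}$, not Lebesgue. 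Conversely, if you insist that the fiber maps be uniformly expanding so as to keep $\rho_F$ bounded away from $0$, then $\log(k+\partial_y\phi_1)>0$ pointwise and $\chi^c>0$, so your mechanism cannot produce a negative exponent at all. The same concentration phenomenon also breaks your simplicity argument: ``topological mixing of the skew product'' fails once there is a trapping strip, so positivity of $\mathcal{P}_F$ alone does not give simplicity of the eigenvalue $1$ in the regime you need for $\chi^c<0$.

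The paper resolves both points with the same device. The fiber family is built from a saddle-node deformation $f_{\varepsilon,a}$ of the doubling map. For $a\ge 1$ the fiber maps are uniformly expanding and $\chi^c>0$ trivially, as in your $F_0$. For $a+\delta<0$ there is an explicit forward-invariant strip $W=\mathbb{T}\times((\delta-a)\varepsilon,\varepsilon/2)$ on which the fiber dynamics is (non-uniformly) contracting; the SRB measure lives in $W$ and $\chi^c<0$ is read off directly, with no need to compare $\rho_F$ to Lebesgue. Simplicity is then shown by a case split: when $a+\delta>0$ the map restricted to $\{0\}\times\mathbb{T}$ is uniformly expanding and one gets topological transitivity (your mixing argument, essentially); when $a+\delta\le 0$ one instead shows the complement of $\cup_n F_a^{-n}(W)$ has empty interior and uses the $C^r$ regularity of any candidate invariant density to conclude it is Lebesgue-null. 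So the key idea you are missing is to engineer a trapping region in the fiber and argue about the basin of that region, rather than trying to keep the density comparable to Lebesgue.
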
 
The claims of the theorem above imply that, if we take any $C^\infty$ one-parameter family $G_t$ that connects $F_-$ and $F_+$ in $\mathcal{U}$, we observe that the SRB measure $\mu_{G_t}$ changes smoothly with respect to $t$ while  the central Lyapunov exponent will change its sign at some parameter.

\section{Circle endomorphisms}\label{sec:circ}
We first consider the doubling map on the circle $\mathbb{T}$:  
\[
f_0:\mathbb{T}\to \mathbb{T}, \quad f_0(y)=2y \quad \mod\mathbb{Z}. 
\]
Below we deform the map $f_0$ in order to make a neutral fixed point in a small neighborhood of $0\in \mathbb{T}$.

Let $\varphi:\real\to \real$ be a $C^\infty$ map with the following properties:
\begin{enumerate}
\renewcommand{\labelenumi}{(\roman{enumi})}
\item $0\le \varphi(y)\le 1$ and $|\varphi'(y)|\le 4/3$ for $y\in \real$,
\item $\varphi(y)=0$ for $y\notin [1/10,1]$, 
\item  $\varphi(1/2)=1/2$, $\varphi'(1/2)=1$, $\varphi''(1/2)<0$ and 
\item $\varphi(y)<y$ for $y\in (0,1)\setminus \{1/2\}$.

\end{enumerate}
For a small real number $\varepsilon>0$, we define 
\[
f_{\varepsilon}:\mathbb{T}\to \mathbb{T}, \quad f_\varepsilon(y)=
\begin{cases}
f_0(y)-\varepsilon \cdot \varphi(\varepsilon^{-1}y),&\quad \text{if $y\in [0,\varepsilon]$};\\
f_0(y),&\quad\text{otherwise.}
\end{cases}
\]
For the dynamics of $f_\varepsilon$, we observe that there are only two fixed points $0$ and $P=\varepsilon/2$: $0$ is a hyperbolic repelling fixed point and $P=\varepsilon/2$ is a one-sided attracting neutral fixed point  with immediate basin $(0,P]$. 

We henceforth suppose that the parameter $\varepsilon>0$ is sufficiently small, say $0<\varepsilon<1/100$. 
Then, for $a\in \real$, we set
\[
f_{\varepsilon,a}:\mathbb{T}\to \mathbb{T}, \quad f_{\varepsilon,a}(y)=f_{\varepsilon}(y)+a\varepsilon.
\]
From the assumption (iv),  we have
\[
\frac{2}{3}\le f'_{\varepsilon,a}(y)\le \frac{10}{3}\quad \text{for any }y\in \mathbb{T}. 
\]
Hence, if $a\ge 1$, we have that $f_{\varepsilon,a}^{-1}([0,\varepsilon])\cap (0,\varepsilon)=\emptyset$ and hence \begin{equation}\label{eq:expanding}
(f_{\varepsilon,a}^2)'(y)\ge 2\cdot \frac{2}{3}=\frac{4}{3}>1\quad \text{for any }y\in \mathbb{T}.
\end{equation}
The family $a\mapsto f_{\varepsilon,a}$ exhibits the saddle-node bifurcation of the fixed point $0$ at the parameter $a=0$.  It is not difficult to check that $f_{\varepsilon,a}$ is uniformly expanding  if $0<a\le 2$. 
If $a<0$ and $|a|$ is sufficiently small, then $f_{\varepsilon,a}$ admits three fixed points
\[
P_0=-a\varepsilon <P_-<P_+
\]
in a small neighborhood of $0$, where $P_0$ and $P_+$ are hyperbolic repelling while $P_-$ is hyperbolic attracting. The immediate basin of the hyperbolic attracting fixed point $P_-$ is the interval $B=(P_0,P_+)$ and we have
\[
\lim_{a\to -0} P_0=0,\quad \lim_{a\to -0} P_-=\lim_{a\to -0} P_+=\frac{\varepsilon}{2}. 
\]

\begin{figure}
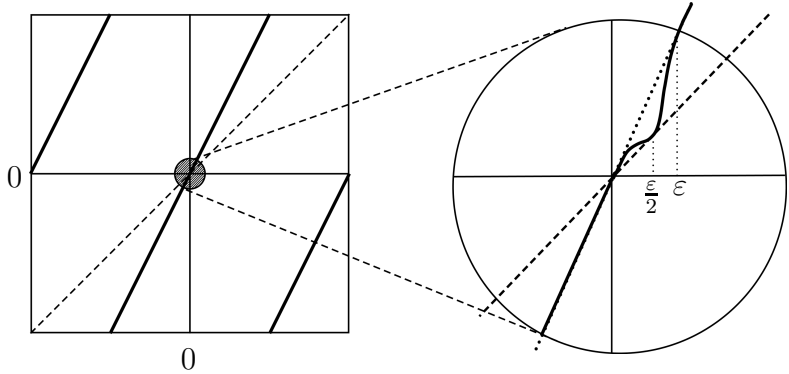

\begin{overpic}[scale=.10]{intermittent.eps}
\put(85,21){$\varepsilon$}
\put(81,20){$\frac{\varepsilon}{2}$}
\put(20,-2){$0$}
\put(-3,22){$0$}
\end{overpic}
\caption{The graph of the function $f_\varepsilon$.}
\label{fig0}
\end{figure}

\section{Skew products over angle multiplying maps}
We consider the dynamics of perturbations of the skew product 
\[
F_{\varepsilon,a,\delta,m}:\mathbb{T}^2\to \mathbb{T}^2,\quad 
F_{\varepsilon,a,\delta,m}(x,y)=(mx, f_{\varepsilon,a}(y)+ \delta \varepsilon \cos 2\pi x)
\] 
where $m$ is a positive integer and $\delta>0$ is a positive real parameter.
In the following, we suppose that $r>0$ is a given integer. We suppose that the constants $\varepsilon>0$ and $\delta>0$ are small, say $\varepsilon,\delta\in (0,1/100)$. We will also fix $m$ as a large constant so that the conclusion of Theorem \ref{th:1} below holds true.  Since  we  regard $F_{\varepsilon,a,\delta,m}$ as a one parameter family with parameter $a\in [-2\delta,2]$, we henceforth write $F_a$ for $F_{\varepsilon,a,\delta,m}$. 
 
 \subsection{Quasi-compactness of $\mathcal{P}$}
We adapt the argument in 
\cite{MR3772032} to get the next theorem. 
Since the situation is only a little different from that in \cite{MR3772032}, we give a brief account on its proof in Section \ref{sec:pf}. \begin{Theorem}\label{th:1}
If we let $m$ be sufficiently large depending on the parameters $r$, $\varepsilon$, $\delta$ and a given $0<\rho_0<1$, there exists a Hilbert space $\mathcal{H}$ satisfying \eqref{eq:H} and a $C^\infty$ neighborhood $\mathcal{U}\subset C^\infty(\mathbb{T}^2,\mathbb{T}^2)$ of the family $\mathcal{F}=\{F_a=F_{\varepsilon,a,\delta,m}, a\in[-2\delta,2]\}$, such that the Perron-Frobenius operator $\cP_F:\mathcal{H}\to \mathcal{H}$ for $F\in \mathcal{U}$ is bounded  and its essential spectral radius is bounded by $\rho_0$. 

Further, if $1$ is a simple eigenvalue of the Perron-Frobenius operator $\cP_F:\mathcal{H}\to \mathcal{H}$ for every $F\in \mathcal{F}$, then, by letting the neighborhood $\mathcal{U}$ be smaller, we may suppose that the same is true for all $F\in \mathcal{U}$ and the positive eigenfunction $\rho_F\in \mathcal{H}$ for the simple eigenvalue $1$, determined by the condition $\int \rho_F d\mathrm{Leb} =1$, depends on $F$ smoothly in the following sense: for any $C^\infty$ one-parameter family $G_t$ of maps in $\mathcal{U}$ and $\psi\in C^\infty(\mathbb{T}^2)$, the correspodence $t\mapsto \int \psi d\mu_{G_t}=\int \psi \rho_{G_t} d\mathrm{Leb}$ is $C^r$.  
\end{Theorem}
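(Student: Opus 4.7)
The plan is to adapt the anisotropic Hilbert space argument of \cite{MR3772032} (itself in the lineage of \cite{AGT,GouezelLiverani06}) to the present skew-product family $F_a$. The structural facts that drive the argument are: (i) for $m$ large, $F_a$ is partially hyperbolic with a narrow unstable cone around the horizontal direction and a complementary central cone around the vertical, both of which are robust under $C^\infty$-small perturbations; and (ii) the cotangent dynamics associated to $F_a$ contracts covectors in the dual unstable cone by a factor $\sim 1/m$ per iterate, while the vertical derivative $f'_{\varepsilon,a}$ is pinched in $[2/3,10/3]$ and the cross-derivative from $\delta\varepsilon\cos2\pi x$ is uniformly $O(\delta)$. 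The Hilbert space $\mathcal{H}$ is then defined by a microlocally weighted $L^2$-norm, using a dyadic Littlewood--Paley decomposition of $T^*\mathbb{T}^2$ adapted to the dual cone splitting, with weights large on the dual unstable cone (forcing regularity in $x$) and large enough overall to guarantee the embedding $C^\infty\subset\mathcal{H}\subset C^r$; the precise rate of weight growth is tuned so that one iterate of $\cP_F$ brings the top-scale pieces down by a factor as small as the prescribed $\rho_0$ once $m$ is large.

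The main analytic step is then a Lasota--Yorke inequality
\[
\|\cP_F^n u\|_{\mathcal{H}}\le C\rho_0^n\|u\|_{\mathcal{H}}+C_n\|u\|_{L^2}
\]
uniform for $F\in\mathcal{U}$, a suitable $C^\infty$-neighbourhood of $\mathcal{F}$. The boundedness of $\cP_F:\mathcal{H}\to\mathcal{H}$ and the L--Y bound both reduce to the same kind of matrix-element estimate: decompose $u$ and $v=\cP_F u$ into Littlewood--Paley pieces along the dual cone splitting, note that the horizontal expansion by $m$ sends a dual-unstable piece at frequency scale $k$ to one at scale $\sim k-\log_2 m$, and extract a gain proportional to $\rho_0^n$ from the weight ratio per iterate. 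Compactness of $\mathcal{H}\hookrightarrow L^2$ together with Hennion's theorem then yields the essential spectral radius bound~$\rho_0$.

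For the smoothness assertion, I would set up the two-space perturbation scheme of \cite{GouezelLiverani06}. Introduce an auxiliary Hilbert space $\mathcal{H}_-\supset\mathcal{H}$ defined by the same microlocal recipe but with slightly smaller weights, so that $\cP_F$ is bounded on $\mathcal{H}_-$ with the same essential-spectrum bound while
\[
\|\cP_F-\cP_{F_0}\|_{\mathcal{H}\to\mathcal{H}_-}\xrightarrow[F\to F_0\text{ in }C^\infty]{}0;
\]
this last step uses precisely that $C^\infty$-closeness lets one trade one microlocal level in $\mathcal{H}$ for any prescribed power of a negative Sobolev distance between $F$ and $F_0$. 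A Cauchy-integral representation of the spectral projector $\Pi_F$ attached to the simple eigenvalue $1$, plugged into the resolvent identity $(\cP_F-z)^{-1}-(\cP_{F_0}-z)^{-1}=(\cP_{F_0}-z)^{-1}(\cP_{F_0}-\cP_F)(\cP_F-z)^{-1}$, gives continuity of $\Pi_F:\mathcal{H}\to\mathcal{H}_-$; since $\Pi_F$ is rank one with image spanned by $\rho_F\in\mathcal{H}$, this determines $\rho_F$ uniquely up to the normalisation $\int\rho_F\,d\mathrm{Leb}=1$. Differentiating the resolvent identity along a smooth one-parameter family $G_t$ and iterating $r$ times (each step costing one parameter derivative and gaining one microlocal level) produces the claimed $C^r$-dependence of $t\mapsto\int\psi\,\rho_{G_t}\,d\mathrm{Leb}$.

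The main obstacle I anticipate is verifying the Lasota--Yorke inequality uniformly in $a\in[-2\delta,2]$, particularly at parameters $a$ near the saddle-node value $a=0$. At such parameters the fibre map $f_{\varepsilon,a}$ has a nearly neutral fixed point near $\varepsilon/2$ and therefore contributes essentially no contraction in the cotangent direction, so the whole spectral gain $\rho_0^n$ must be extracted from the horizontal expansion by $m$. This forces $m$ to depend on $\varepsilon$ and $\delta$ (through the width of the neutral region and the amplitude $\delta\varepsilon$ of the forcing) in addition to $\rho_0$ and $r$, exactly as asserted in the statement of the theorem. The corresponding calculation from \cite{MR3772032} was done in a slightly different setting, and while its structure carries over, the cone evolution and Jacobian bounds have to be re-verified in the presence of the neutral fibre fixed point, which is the content of the ``brief account'' referred to in Section \ref{sec:pf}.
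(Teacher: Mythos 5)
Your high-level plan — anisotropic Hilbert space built from a dyadic decomposition of $T^*\mathbb{T}^2$ adapted to a cone splitting, a Lasota--Yorke estimate followed by Hennion's theorem for the essential spectral radius, and the Gou\"ezel--Liverani two-space perturbation scheme for the $C^r$-dependence — matches the general framework that the paper inherits from \cite{MR3772032} and \cite{GouezelLiverani06}. The smoothness part of your proposal is essentially the paper's argument.

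However, your account of where the essential spectral radius bound actually comes from has a genuine gap. You state that near the saddle-node parameter ``the whole spectral gain $\rho_0^n$ must be extracted from the horizontal expansion by $m$,'' by tracking how horizontal expansion shifts the dual-unstable Littlewood--Paley scale by $\log_2 m$. This is not sufficient. The space $\mathcal{H}$ must embed in $C^r(\mathbb{T}^2)$, so it carries positive-regularity weight in both covector directions; for pieces concentrated near the \emph{vertical} (central) covector direction, the cotangent dynamics multiplies $\xi_y$ by $1/f'_{\varepsilon,a}$, and near the neutral fixed point $f'_{\varepsilon,a}\approx 1$, so the weight ratio provides no decay in that cone. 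More fundamentally, $\cP$ is a sum over the $m$ preimage branches, each with Jacobian $\sim 1/m$; without cancellation between branches the trivial estimate gives operator norm of order $1$ on the high-$|\xi_y|$ pieces, and increasing $m$ cannot help. The mechanism that closes this gap in \cite{MR3772032}, and the one new verification the present paper performs in Section \ref{sec:pf}, is a \emph{transversality condition}: for the unstable cones $\mathbf{C}=\{|v_y|\le C_0\delta\varepsilon|v_x|\}$, one counts the preimages $q,q'$ of a point $p$ whose cone images $DF_q(\mathbf{C})$ and $DF_{q'}(\mathbf{C})$ overlap, and defines the quantity $\mathbf{m}(F)$ from this count. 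The key Lemma of Section \ref{sec:pf} shows $\mathbf{m}(F)\to 0$ as $m\to\infty$ uniformly in $a$; this is what provides almost-orthogonality between the $m$ branches on the dual-central cone (concretely, the phases $e^{2\pi i\nu\delta\varepsilon\cos 2\pi x'}$ from the twist decorrelate when $\nu$ is large), and it is the true source of the bound $\rho_0$ on the essential spectral radius. Your proposal never introduces $\mathbf{m}(F)$ or any branch-transversality argument, treating the cross-derivative $\delta\varepsilon\cos 2\pi x$ as a harmless $O(\delta)$ perturbation rather than as the essential ingredient; this is the part of the argument that would need to be supplied before the Lasota--Yorke inequality could be established at the parameters near the saddle-node.
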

\begin{Remark}
We can not let $r=\infty$ in our construction  because it is essential to take $m$ large enough depending on $r$. 
\end{Remark}
\subsection{Simplicity of the eigenvalue $1$}
We show the following theorem for the family $\mathcal{F}=\{F_a\mid  a\in [-2\delta,2]\}$. 
\begin{Theorem}\label{th:2}
For any $a\in [-2\delta,2]$, the principal eigenvalue $1$ of $\cP_F:\mathcal{H}\to \mathcal{H}$ is simple and there is no other eigenvalue on the unit circle. 
The eigenfunction $\rho_a\in\mathcal{H}$ for the simple eigenvalue $1$ satisfying  $\int \rho_a d\mathrm{Leb}=1$ is the density of the SRB measure   $\mu_{a}$ with respect to the Lebesgue measure. 
\end{Theorem}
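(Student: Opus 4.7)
The plan is to combine the quasi-compactness provided by Theorem~\ref{th:1} with a positivity argument and a Koopman-theoretic analysis of the peripheral spectrum. By Theorem~\ref{th:1} the spectrum of $\cP_{F_a}:\mathcal{H}\to\mathcal{H}$ outside the disk $\{|z|\le \rho_0\}$ consists of finitely many isolated eigenvalues of finite algebraic multiplicity. Since $\int (\cP_{F_a}u)\,d\mathrm{Leb}=\int u\,d\mathrm{Leb}$ for every $u\in\mathcal{H}$, the value $1$ belongs to the spectrum of the dual of $\cP_{F_a}$, and hence to the spectrum of $\cP_{F_a}$ itself, so it must be an eigenvalue. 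A Perron--Frobenius/Krein--Rutman argument applied to the positivity-preserving action of $\cP_{F_a}$, together with the finite dimensionality of the generalized eigenspace at $1$, produces a nonnegative real eigenfunction $\rho_a\in\mathcal{H}$, which we normalize so that $\int\rho_a\,d\mathrm{Leb}=1$. The absence of a nontrivial Jordan block at $1$ follows from the $L^1$-contractivity of $\cP_{F_a}$ on densities, as the iterates of such a block would grow polynomially in norm.

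The simplicity of $1$ and the exclusion of further peripheral eigenvalues follow from a standard scheme once ergodicity of $\mu_a=\rho_a\,d\mathrm{Leb}$ is established. Assume $\cP_{F_a}u=\lambda u$ with $|\lambda|=1$ and $u\in\mathcal{H}\setminus\{0\}$. The pointwise inequality $|\cP_{F_a}u|\le \cP_{F_a}|u|$ combined with $\int|\cP_{F_a}u|\,d\mathrm{Leb}=\int|u|\,d\mathrm{Leb}=\int\cP_{F_a}|u|\,d\mathrm{Leb}$ forces $\cP_{F_a}|u|=|u|$ almost everywhere, and equality in the triangle inequality implies that the arguments of $u$ agree over every preimage of $F_a$. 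Granting ergodicity of $\mu_a$, $|u|$ must then be a scalar multiple of $\rho_a$, so $u=c\rho_a\phi$ with $|\phi|\equiv 1$ on $\{\rho_a>0\}$; the coincidence of arguments just obtained translates into the Koopman relation $\phi\circ F_a=\bar\lambda\,\phi$. Weak mixing of $F_a$ with respect to $\mu_a$ then excludes Koopman eigenvalues on the unit circle other than $1$, so $\lambda=1$ and $\phi$ is constant. The resulting spectral gap yields $\cP_{F_a}^n u\to \rho_a\int u\,d\mathrm{Leb}$ in $\mathcal{H}$, and in particular $\cP_{F_a}^n 1\to \rho_a$ uniformly. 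Since $(F_a^n)_*\mathrm{Leb}$ has density $\cP_{F_a}^n 1$, this gives $(F_a^n)_*\mathrm{Leb}\to\mu_a$ in total variation, from which a standard genericity argument identifies $\mu_a$ as the SRB measure.

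The main obstacle is therefore the ergodicity and weak mixing of $\mu_a$, to be established uniformly for $a\in[-2\delta,2]$. For $a\in(0,2]$ the fiber map $f_{\varepsilon,a}$ is uniformly expanding by \eqref{eq:expanding} and standard skew-product techniques apply. For $a\le 0$ however $f_{\varepsilon,a}$ possesses an indifferent (at $a=0$) or an attracting (for $a<0$) fixed point, and the argument must exploit the coupling $\delta\varepsilon\cos 2\pi x$ together with the large base factor $m$. A natural line of attack is to use the exactness of $x\mapsto mx$ to propagate whatever fiber expansion occurs at values of $x$ for which the effective fiber parameter $a+\delta\cos 2\pi x$ is positive (available whenever $a>-\delta$) throughout the whole phase space, yielding topological mixing of $F_a$; combined with the finite-dimensional peripheral spectrum furnished by Theorem~\ref{th:1}, this excludes further Koopman eigenvalues and reduces the peripheral spectrum to $\{1\}$. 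The extreme case $a\in[-2\delta,-\delta]$ is subtler as the fiber carries an attracting fixed point for every $x$, and here one has to rely on the fact that the large factor $m$ causes the cone-field expansion in the unstable direction to dominate the fiber contraction, so that the statistical mixing of the base still provides the irreducibility needed to conclude.
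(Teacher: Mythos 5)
Your abstract Krein--Rutman / Koopman reduction to ergodicity of $\mu_a$ is a reasonable outline, but the paper's proof is organized quite differently, and the substance lies precisely in the ergodicity verification that the proposal leaves incomplete. The paper splits at $a+\delta=0$, not at $a=0$. In Case (i) ($a+\delta>0$) it proves topological irreducibility directly (Lemma~\ref{lm:erg}): since $F_a$ restricted to the single fiber $\{0\}\times\mathbb{T}$ is $f_{\varepsilon,a+\delta}$, which is uniformly expanding when $a+\delta>0$, and $F_a$ is horizontally expanding, one gets $\bigcup_{n\ge 0}F_a^n(U)=\mathbb{T}^2$ for every nonempty open $U$. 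You hint at this idea (``propagate fiber expansion from those $x$ with $a+\delta\cos 2\pi x>0$''), but file it as a separate subcase of $a\le 0$, missing that the single fiber $\{x=0\}$ suffices and that this one observation covers the whole range $a>-\delta$ uniformly, with no separate argument needed for $a\in(0,2]$.

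The genuine gap is the regime $a\le -\delta$ (the paper's Case (ii)). Your treatment is a single heuristic sentence (``the large factor $m$ causes the cone-field expansion to dominate the fiber contraction, so statistical mixing of the base still provides irreducibility''), which is not an argument, and it is also not the route the paper takes. The paper does not derive irreducibility from any mixing of the base; instead it exhibits the forward-invariant strip $W=\mathbb{T}\times((\delta-a)\varepsilon,\varepsilon/2)$ on which the fiber dynamics is contracting, obtains there a unique mixing SRB measure $\mu_{F_a}$, and then shows Lebesgue-a.e.\ point of $\mathbb{T}^2$ eventually enters $W$: the set $C=\mathbb{T}^2\setminus\bigcup_n F_a^{-n}(W)$ meets every fiber in a set with empty interior (because $\pi_2\partial_y(F_a\circ F_a)>1$ off $W$), and $C$ is forced to be Lebesgue-null because otherwise a Ces\`aro limit of $\cP^k\mathbf{1}_C$ would produce a $\cP$-invariant element of $\mathcal{H}$ supported on the interior-free set $C$, contradicting the spectral structure furnished by Theorem~\ref{th:1}. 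This interplay between the trapping region and the spectrum of $\cP$ is absent from the proposal. Finally, your concluding pipeline ``spectral gap $\Rightarrow\cP_{F_a}^n 1\to\rho_a$ uniformly $\Rightarrow(F_a^n)_*\mathrm{Leb}\to\mu_a$ in total variation'' tacitly assumes $\rho_a$ is a continuous density; for $a\le-\delta$ the central Lyapunov exponent is negative (Theorem~\ref{th:3}(a)), so $\mu_a$ is concentrated on a Lebesgue-null attractor inside $W$, $\rho_a$ is a genuine distribution rather than a function, and neither uniform nor total-variation convergence can hold. The genericity argument must therefore be made, as the paper does, through the trapping region rather than through norm convergence of $\cP^n 1$.
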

\begin{proof} 
We consider the following two cases for $a\in [-2\delta,2]$ separately:
\[
\textrm{(i)}\; a+\delta>0,\quad
\textrm{(ii)}\; a+\delta\le 0.
\]

\subsection*{Case (i)} First we prove 
\begin{Lemma}\label{lm:erg}
In Case (i), we have  $U_\infty:=\cup_{n\ge 0}F_a^n(U)=\mathbb{T}^2$ for any non-empty open subset $U$ on $\mathbb{T}^2$. 
\end{Lemma}
\begin{proof}
Since $F_a$ is expanding in the horizontal (or $x$-) direction, we have that $U_\infty\cap (\{0\}\times \mathbb{T})\neq \emptyset$. The map  $F_{a}$ restricted to $\{0\}\times \mathbb{T}$ can be identified with  $f_{\varepsilon,a+\delta}$.
From the assumption, we have $a+\delta>0$ and hence $f_{\varepsilon,a+\delta}$ is uniformly expanding, provided that $\delta>0$ is sufficiently small.
\begin{Remark}
The last claim is not completely obvious but easy to check. Let $f=f_{\varepsilon,a}$. 
To show that $f$ is uniformly expanding, 
it is enough to show that there exists $n>0$ for any $x\in \mathbb{T}$ such that $(f^n)'(x)>1$. This holds obviously with $n=1$ for $x$ on the outside of $(0,\varepsilon)$. For a point $x\in (0,\varepsilon)$, we let $m$ be the smallest integer such that $f^{m}(x)\notin (0,\varepsilon)$. By the elementary estimates on intermittent one dimensional map, we see that
$(f^{m})'(x)>h>0$ for some constant $h>0$ independent of $a>0$ and $\varepsilon>0$ (as far as they are sufficiently small). By letting $\varepsilon>0$ be sufficiently small, we may suppose that the orbit starting from $f^{m}(x)$ will not return to $(0,\varepsilon)$ for arbitrarily long time and therefore we can find $n>m$ such that $(f^{n})'(x)>1$. 
\end{Remark}

Hence we have $U_\infty\supset  \{0\}\times \mathbb{T}$. Again, using the fact that $F_a$ is expanding in the horizontal direction, we obtain the claim $U_\infty=\mathbb{T}^2$.
\end{proof} 

Suppose that $\rho\in \mathcal{H}$ is an eigenfunction for an eigenvalue on the unit circle. Then we have $|\mathcal{P}^n\rho|=|\rho|=\mathcal{P}^n|\rho|$ for $n\ge 1$. From the last lemma, this holds only if $\rho=e^{i\theta} |\rho|$ for some $\theta\in [0,2\pi)$ and therefore we may suppose $\rho\ge 0$. This implies that there is no eigenvalue on the unit circle other than $1$. By the same reason,  the geometric multiplicity of the eigenvalue $1$ should be $1$. Further, since $\mathcal{P}$ preserves the integral of functions with respect to the Lebesgue measure, we conclude that the algebraic multiplicity is not greater than $1$. 

Let $\rho_{F_a}\in \mathcal{H}\subset C^r(\mathbb{T}^2)$ be the eigenfunction of $\cP_{F_a}$ for the simple eigenvalue $1$. We may and do suppose that $\rho_{F_a}$ is non-negative and $\int \rho_{F_a} \,d\mathrm{Leb}=1$. Then the measure $\nu_{F_a}:=\rho_{F_a} \,\mathrm{Leb}$ is ergodic since $\mathcal{P}^n u$ converges to a constant multiple of $\rho_{F_a}$ for any $u\in\mathcal{H}$.
Since $\rho_{F_a}\in C^r(\mathbb{T})$, there is an open subset $U\subset \mathbb{T}^2$ on which $\rho_{F_a}>0$ and therefore almost every point in $U$ is generic for $\mu_{F_a}$. As $F_a$ is locally diffeomorphic, almost every point on $F_a^n(U)$ with $n\ge 0$ is generic for $\mu_{F_a}$. Since $\cup_{n\ge 0}F_a^n(U)=\mathbb{T}^2$ as we showed in Lemma \ref{lm:erg}, we conclude that almost every point on $\mathbb{T}^2$ is generic for $\mu_{F_a}$. This finishes the proof of the theorem in Case (i).

\subsection*{Case (ii)} Note that $a\le -\delta<0$ in this case. The region 
\[
W=\mathbb{T}\times ((\delta-a)\varepsilon, \varepsilon/2)
\]
satisfies $F_a(W)\subset W$ and the  iteration of $F_a$ is (non-uniformly) contracting on the fibers $\{x\}\times ((\delta-a)\varepsilon, \varepsilon/2)$ for $x\in \mathbb{T}$. 
\begin{Remark}
The choice of the interval $(\delta-a)\varepsilon,\varepsilon/2)$ in the definition of $W$ is made as follows: The left end point $y_-=(\delta-a)\varepsilon$ is the unique point in $(0,\varepsilon/10)$ satisfying $
f_{\varepsilon,a-\delta}(y_-)=y_-$.
(Recall the condition (ii) in the definition of the function $\varphi$.)
The right end point $y_+=\varepsilon/2$  is the neutral fixed point of $f_{\varepsilon,0}$, which satisfies $f_{\varepsilon,a+\delta}(y_+)\le y_+$ when $a+\delta\le 0$. 
\end{Remark}
Hence there exists a unique mixing $F_a$-invariant measure $\mu_{F_a}$ supported in $W$ such that Lebesgue almost every point on $W$ is generic for $\mu_{F_a}$. 

Writing $\pi_2:\real^2\to \real$ for the projection to the second component, we have $\pi_2\partial_y (F_{a}\circ F_{a})(p)>1$ on the complement of $W$, with only one exception $p=(0,\varepsilon/2)$ when $a+\delta=0$. Hence the intersection of the complement 
\[
C=\mathbb{T}^2\setminus \cup_{n\ge 0} F_{a}^{-n}(W)
\]
with any fiber $\{x\}\times \mathbb{T}$ can not contain any non-trivial interval. 

We next show that the complement $C$ is of null Lebesgue measure.  
Suppose that $C$ has positive Lebesgue measure and write $\mathbf{1}_{C}$ for the characteristic function of it. Then we can find a weak limit point $\rho$ of the sequence $(1/n)\sum_{k=0}^{n-1}\mathcal{P}^k\mathbf{1}_{C}$. By approximating $\mathbf{1}_{C}$ by $C^\infty$ function in $L^1$ sense  and using the spectral property of $\cP$ in Theorem \ref{th:1}, we see that $\rho$ belongs to $\mathcal{H}\subset C^r(\mathbb{T}^2)$ and is supported on $C$. But this is impossible because $C$ has no interior point. 
 
Since the complement $C$ is of null Lebesgue measure, almost every point on $\mathbb{T}^2$ is generic for the mixing measure $\mu_F$. Clearly this implies the conclusion of the theorem. 
\end{proof}

Finally we prove the following theorem on the central Lyapunov exponent of the SRB measure $\mu_{F_a}$ for $F_a$ with $a\in [-2\delta,2]$. Note that we always assume that $\varepsilon>0$ and $\delta>0$ are small.
\begin{Theorem}\label{th:3}
{\rm (a)} If $a+\delta<0$, the central Lyapunov exponent $\chi^c(\mu_{F_a})$ is negative. \\
{\rm (b)} If $a\ge 1$,  the central Lyapunov exponent $\chi^c(\mu_{F_a})$ is positive. 
\end{Theorem}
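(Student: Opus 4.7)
The central direction of $F_a$ is everywhere vertical with central derivative $f'_{\varepsilon,a}(y)$ at $(x,y)$, so
\[
\chi^c(\mu_{F_a})=\int_{\mathbb{T}^2}\log f'_{\varepsilon,a}(y)\,d\mu_{F_a}(x,y).
\]
For part (b) with $a\ge 1$, the plan is to adapt the two-step estimate \eqref{eq:expanding} to the perturbed system. Using $F_a$-invariance of $\mu_{F_a}$,
\[
2\chi^c(\mu_{F_a})=\int \bigl[\log f'_{\varepsilon,a}(y)+\log f'_{\varepsilon,a}(y')\bigr]\,d\mu_{F_a}(x,y),\qquad y':=f_{\varepsilon,a}(y)+\delta\varepsilon\cos 2\pi x,
\]
so it suffices to bound the integrand from below by a positive constant pointwise. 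When $y\notin(0,\varepsilon)$, $f'_{\varepsilon,a}(y)=2$ and the universal bound $f'_{\varepsilon,a}\ge 2/3$ on the second factor gives a product $\ge 4/3$. When $y\in(0,\varepsilon)$, condition (iv) yields $\varepsilon\varphi(\varepsilon^{-1}y)\le y$, so $f_{\varepsilon,a}(y)\ge y+a\varepsilon>\varepsilon$ for $a\ge 1$; hence $y'\ge(1-\delta)\varepsilon$, and since $\varphi\in C^\infty$ vanishes identically on $[1,\infty)$, $|\varphi'|$ is arbitrarily small on $[1-\delta,1]$ for $\delta$ small enough, giving $f'_{\varepsilon,a}(y')\ge 2-o_\delta(1)$. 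Combined with $f'_{\varepsilon,a}(y)\ge 2/3$, the product exceeds some $c>1$, so $\chi^c>0$.

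For part (a) with $a+\delta<0$, $\mu_{F_a}$ is concentrated in the trapping region $W=\mathbb{T}\times((\delta-a)\varepsilon,\varepsilon/2)$ on which the fiber dynamics is non-uniformly contracting. My approach is to compare with the skew-product limit $\delta=0$: the dynamics $F_{\varepsilon,a,0,m}$ has SRB measure $\mathrm{Leb}_{\mathbb{T}}\otimes\delta_{P_-(a)}$ and central exponent $\log f'_{\varepsilon,a}(P_-(a))<0$ because $P_-(a)$ is a hyperbolic attracting fixed point of $f_{\varepsilon,a}$ when $a<0$. For $\delta>0$, the normally hyperbolic attracting circle $\mathbb{T}\times\{P_-(a)\}$ persists as a $C^r$ invariant graph $\{y=\phi_a(x)\}$ that carries $\mu_{F_a}$, so
\[
\chi^c(\mu_{F_a})=\int_{\mathbb{T}}\log f'_{\varepsilon,a}(\phi_a(x))\,dx,
\]
which is close to and has the same sign as $\log f'_{\varepsilon,a}(P_-(a))<0$.

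The main obstacle is part (a) as $a\to -\delta^-$: near this saddle-node boundary, $P_-(a)$ merges with the neutral fixed point $\varepsilon/2$ of $f_{\varepsilon,0}$, and the hyperbolicity rate $|\log f'_{\varepsilon,a}(P_-(a))|$ decays like $\sqrt{|a+\delta|}$, so the persistence and perturbation estimates must be quantitatively matched to this bifurcation scale. Since $\delta<1/100$ is small while the hyperbolicity margin is of order at most $\sqrt{\delta}$, a careful quantitative version of the persistence argument---or a direct Lyapunov-function analysis on $W$ exploiting the strict inequality $f_{\varepsilon,a}(\varepsilon/2)+\delta\varepsilon\cos 2\pi x<\varepsilon/2$ holding uniformly in $x$---should close the argument for all $a\in[-2\delta,-\delta)$.
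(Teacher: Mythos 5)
For part (b) your argument is correct and essentially the same as the paper's: the paper simply invokes the two-step expansion bound \eqref{eq:expanding} for $f_{\varepsilon,a}$ and notes that it persists for $\delta$ small; you make the persistence explicit by tracking where $y'=f_{\varepsilon,a}(y)+\delta\varepsilon\cos 2\pi x$ lands and using the vanishing of $\varphi'$ near $1$. That is a legitimate fleshing-out, not a different route.

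For part (a) you do take a genuinely different route -- normal hyperbolicity and persistence of the attracting invariant circle $\mathbb{T}\times\{P_-(a)\}$ from the $\delta=0$ skew product -- and, as you yourself flag, this leaves a real gap. The persistence argument requires the contraction rate of the circle (of order $\sqrt{|a+\delta|}$ near the saddle-node) to dominate the size $\delta$ of the coupling, and this fails for $a$ in a whole subinterval of $[-2\delta,-\delta)$ adjacent to the boundary $a=-\delta$; indeed near $a=-\delta$ it is not even clear that a normally hyperbolic invariant graph exists. Your proposed fallback (``a direct Lyapunov-function analysis on $W$'') is the right instinct but is not carried out, so the claim is not established for the full range $a+\delta<0$. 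The paper avoids this entirely: in the proof of Theorem~\ref{th:2}, Case~(ii), it is shown that $W=\mathbb{T}\times((\delta-a)\varepsilon,\varepsilon/2)$ is forward invariant and that the fiber dynamics is (non-uniformly) contracting there, so the unique SRB measure is confined to $W$ and has $\chi^c<0$ directly -- an argument that degrades gracefully all the way up to $a+\delta=0$ rather than relying on a hyperbolicity margin. To close your version you would need to either quantify the normal hyperbolicity uniformly in $a\in[-2\delta,-\delta)$ against the fixed perturbation size $\delta$ (which fails), or replace it with the trapping-region argument on $W$ and actually prove that confinement to $W$ forces the fiberwise Birkhoff averages of $\log f'_{\varepsilon,a}$ to be negative; neither step is present.
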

\begin{proof} (a) As we observed in the proof of Theorem \ref{th:2} in Case (ii), there is a unique SRB measure $\mu_{F_a}$ whose support is contained in $W$ and its central Lyapunov exponent is negative. \\
(b) By \eqref{eq:expanding}, the map $F_{a}$ is expanding along the fibers in this case and therefore the central Lyapunov exponent of the SRB measure is positive, provided that $\delta>0$ is sufficiently small. 
\end{proof}

\section{The proof of Theorem \ref{th:1}}\label{sec:pf}
We can obtain the proof of Theorem  \ref{th:1} by following the argument in \cite{MR3772032} with slight modifications. 
Below we explain briefly how we modify the argument in \cite{MR3772032}. 

First we check a transversality condition. 
We consider the constant cones in the tangent bundle 
\[
\mathbf{C}=\bigcup_{p\in \mathbb{T}^2} \mathbf{C}_p 
=\{(p,v)=((x,y),(v_x,v_y))\in T\mathbb{T}^2\mid |v_y|\le C_0\delta\varepsilon |v_x|\}
\] 
where we fix a large constant $C_0$ so that $DF(\mathbf{C})\subset \mathbf{C}$. 
For given $p\in \mathbb{T}^2$ and $q,q'\in \mathbb{T}^2$, we write $q\pitchfork q'$ if
\[
DF_q(\mathbf{C})\cap DF_{q'}(\mathbf{C})=\{0\}.
\]
We define
\[
\mathbf{m}(F)=\frac{1}{(2/3)\cdot m}\cdot \sup_{p\in \mathbb{T}^2}\sup_{q\in F^{-1}(p)} 
\#\{q'\in F^{-1}(p)\mid q'\pitchfork q\}
\]
where $(2/3)\cdot m$ stands for a lower bound of $\det dF$. 
We can check the following lemma by simple computations. 
\begin{Lemma}
The quantity $\mathbf{m}(F)$ converges to $0$ when we let $m$ go to infinity and the convergence is  uniform for sufficiently small $\varepsilon>0$, $\delta>0$  and any  $a\in [-2,2]$. 
\end{Lemma}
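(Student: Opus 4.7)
The plan is to reduce the transversality condition to an explicit scalar inequality in the coordinates and observe that, for the ``large constant'' $C_0$ fixed in the setup, the inequality fails globally.  First I would compute the derivative at $q=(x,y)\in\mathbb{T}^2$,
\[
DF_q=\begin{pmatrix} m & 0 \\ -2\pi\delta\varepsilon\sin(2\pi x) & f'_{\varepsilon,a}(y)\end{pmatrix},
\]
so that a tangent vector $(v_x,v_y)\in \mathbf{C}_q$ (i.e.\ $v_y/v_x\in[-C_0\delta\varepsilon,\,C_0\delta\varepsilon]$) is sent to a vector of slope $(f'_{\varepsilon,a}(y)/m)(v_y/v_x)-(2\pi\delta\varepsilon/m)\sin(2\pi x)$.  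Hence $DF_q(\mathbf{C})$ is the bi-cone determined by the slope interval $I_q=[\sigma_q-\omega_q,\sigma_q+\omega_q]$ with $\sigma_q=-(2\pi\delta\varepsilon/m)\sin(2\pi x)$ and $\omega_q=(C_0\delta\varepsilon/m)f'_{\varepsilon,a}(y)$, and $q\pitchfork q'$ is equivalent to $I_q\cap I_{q'}=\emptyset$.

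Cancelling the common factor $\delta\varepsilon/m$, the transversality condition becomes the $m$-, $\varepsilon$-, $\delta$-independent inequality
\[
|\sin(2\pi x_q)-\sin(2\pi x_{q'})|>\frac{C_0}{2\pi}\bigl(f'_{\varepsilon,a}(y_q)+f'_{\varepsilon,a}(y_{q'})\bigr).
\]
From the uniform lower bound $f'_{\varepsilon,a}\ge 2/3$ established in Section~\ref{sec:circ} (valid for $\varepsilon\in(0,1/100)$ and $a\in[-2,2]$), the right-hand side is at least $2C_0/(3\pi)$, while the left-hand side never exceeds $2$.  Since the cone-invariance requirement $DF(\mathbf{C})\subset\mathbf{C}$ amounts to the lower bound $C_0\ge 2\pi/(m-10/3)$, it is compatible with any fixed choice $C_0\ge 3\pi$ for every $m\ge 4$, and for such $C_0$ the above inequality fails for all $q,q'$.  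This gives $\#\{q'\in F^{-1}(p):q'\pitchfork q\}=0$ for every $p\in\mathbb{T}^2$ and every $q\in F^{-1}(p)$, hence $\mathbf{m}(F)=0$ for every sufficiently large $m$, uniformly in the stated parameter ranges.

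The main (and only mild) obstacle is the bookkeeping for the constant $C_0$: one must confirm that the ``large $C_0$'' fixed in the paper can be taken large enough ($C_0\ge 3\pi$) while still respecting the invariance constraint $C_0\ge 2\pi/(m-10/3)$, but these two bounds are trivially compatible for every $m\ge 4$.  The same conclusion is visibly stable under small $C^1$ perturbations of $F$, which is what will be needed when passing from the family $\mathcal{F}$ to a $C^\infty$ neighbourhood later in the argument.
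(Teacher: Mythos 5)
Your derivation of the image cones and the resulting scalar inequality is correct, but the conclusion you draw from it — $\mathbf{m}(F)\equiv 0$ for every $m\ge 4$ — is a red flag rather than a proof. If the lemma were literally true for the trivial reason that transversality \emph{never} occurs, there would be no reason to phrase it as a convergence statement ``as $m\to\infty$, uniformly in $\varepsilon,\delta,a$'', and, more importantly, the quantity would then be useless for the rest of the argument. Recall the role of $\mathbf{m}(F)$: it is meant to enter the Lasota--Yorke/quasi-compactness estimate as a bound on the fraction of preimages whose unstable cones cannot be separated, normalised by the Jacobian lower bound $(2/3)m$. If \emph{no} pair of preimages is transversal, this is the worst possible situation for such an estimate, not the best. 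What makes the lemma nontrivial is precisely that one must show that the ``bad'' (overlapping) preimages are few compared to $m$; you have instead argued that all of them are bad and then read this off as ``count $=0$'' because of how the indicator was written.

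Concretely, two things go wrong. First, in the displayed definition of $\mathbf{m}(F)$ the set being counted should be the set of $q'$ for which $DF_q(\mathbf{C})\cap DF_{q'}(\mathbf{C})\neq\{0\}$, i.e.\ the \emph{non}-transversal preimages (this is the analogue of $m(f,1)$ in the cited paper, which counts failures of the transversality condition); with your conclusion that the cones always overlap, the correctly interpreted $\mathbf{m}(F)$ would equal $\tfrac{2m}{(2/3)m}=3$ for all $m$, contradicting the lemma. Second, the premise ``take $C_0\ge 3\pi$'' is the wrong reading of ``fix a large constant $C_0$''. The only constraint imposed on $C_0$ is the cone-invariance inequality $C_0\ge 2\pi/(m-10/3)$, and the whole point is that this lower bound tends to $0$ as $m\to\infty$, so $C_0$ can (and should) be taken of order $1/m$. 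With $C_0\sim 1/m$ your inequality
\[
\bigl|\sin(2\pi x_q)-\sin(2\pi x_{q'})\bigr|\le \frac{C_0}{2\pi}\bigl(f'_{\varepsilon,a}(y_q)+f'_{\varepsilon,a}(y_{q'})\bigr)
\]
is a genuine restriction: the right-hand side is $O(1/m)$, the values $\sin(2\pi x_{q'})$ for $q'\in F^{-1}(p)$ are $m$ essentially equispaced samples of $\sin$, and one must count how many of them can fall in a window of width $O(1/m)$ around $\sin(2\pi x_q)$. Away from the critical values $\pm 1$ of $\sin$ this gives $O(1)$ such $q'$; near a critical value the quadratic behaviour of $\sin$ gives at most $O(\sqrt{m})$. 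In either case the count is $o(m)$, hence $\mathbf{m}(F)=o(1)$, and the estimate is manifestly uniform in small $\varepsilon,\delta$ and in $a\in[-2,2]$ because those parameters cancel or only enter through the bounded factor $f'_{\varepsilon,a}\in[2/3,10/3]$. This counting step — entirely absent from your write-up — is the actual content of the lemma, and it is what breaks down if you let $C_0$ be a fixed constant like $3\pi$.
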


We then follow the argument in \cite{MR3772032} almost literally, noting that $\mathbf{m}(F)$ corresponds to $m(f,1)$ defined in \cite[Sec.3]{MR3772032} and that we just consider the first iteration (or  the case $n=1$ there). 
Another difference of our setting from that in \cite{MR3772032} is the point that we consider non-linear endomorphisms on the fibers while they were rigid rotations in \cite{MR3772032}. But, since we just consider the first iteration, if we take sufficiently fine local charts and partition of unity in the argument in \cite[Sec.4]{MR3772032}, it is direct to get a parallel argument in our setting. 
Then the claim corresponding to \cite[Prop.3]{MR3772032} and Hennion's theorem gives the former claim of Theorem~\ref{th:1}. 
We can deduce the latter claim using the abstract perturbation theorem in \cite[Sec.8]{GouezelLiverani06} about perturbation of transfer operators. For this we again follow the argument in \cite[Sec.4.4]{MR3772032}. 

\section{Some numerical experiments}
We present some results of numerical experiments related to the claim of the main theorem.  
For simplicity of computation, we consider a similar but slightly different setting from that in the previous sections. We  consider a $C^\infty$  map 
$f:\mathbb{T}\to \mathbb{T}$ defined by 
\[
f(y)=2y -  \frac{\sin(2\pi y)+\cos(2\pi x)-1}{2\pi}\quad\mod \mathbb{Z}. 
\]
It has a neutral fixed point at $0$ and its dynamics is of very similar nature to that of $f_\varepsilon$ in Section \ref{sec:circ}. (See also Figure \ref{fig0}.)

Then 
we consider a family of dynamical systems $F_a:\mathbb{T}^2\to \mathbb{T}^2$ defined by
\[
F_a(x,y)=
\left(
7x, f(y)+\delta \cdot \cos(2\pi x)+a
\right)\quad \text{for } a\in [-2\delta,2\delta].
\]
where we set $\delta=10^{-2}$. In Figure \ref{fig1}, we compute the approximate central Lyapunov exponent at a randomly chosen point by iterating $F$ for  $10^6$ times and plot it against the parameters $-0.02\le a\le 0.02$ (resp. $-0.004\le a\le 0.004$) with step $10^{-3}$ (resp. $10^{-4}$).  We observe that the (central) Lyapunov exponent varies smoothly and changes its sign at a parameter $-0.001<a_0<0$. 

We also plot an orbit of randomly chosen initial point at the parameters $a=-0.02, -0.006,-0.003, -0.002$. (We draw the orbit from time $10^3$ to time  $10^6$.)
At the parameter $a=-0.02$, we observe that the orbits are trapped by a horizontal zonal region. 
When the parameter $a$ crosses the value  $-\delta=-0.01$,  we expect that the orbits start to spread over the whole space $\mathbb{T}^2$ and, as the parameter $a$ gets large, the density of the orbits become more uniform. But when the value of $a$ is close to $-0.01$, it is difficult to detect this phenomenon because only very small portion of orbits go out of (the ruin of) the attracting region and return to it again soon.  (See the picture for the parameter $a=-0.006$ in Figure \ref{fig2}.)

\begin{figure}[t]
    \begin{tabular}{cc}
      \begin{minipage}[t]{0.45\hsize}
        \centering
        \includegraphics[keepaspectratio, scale=0.35]{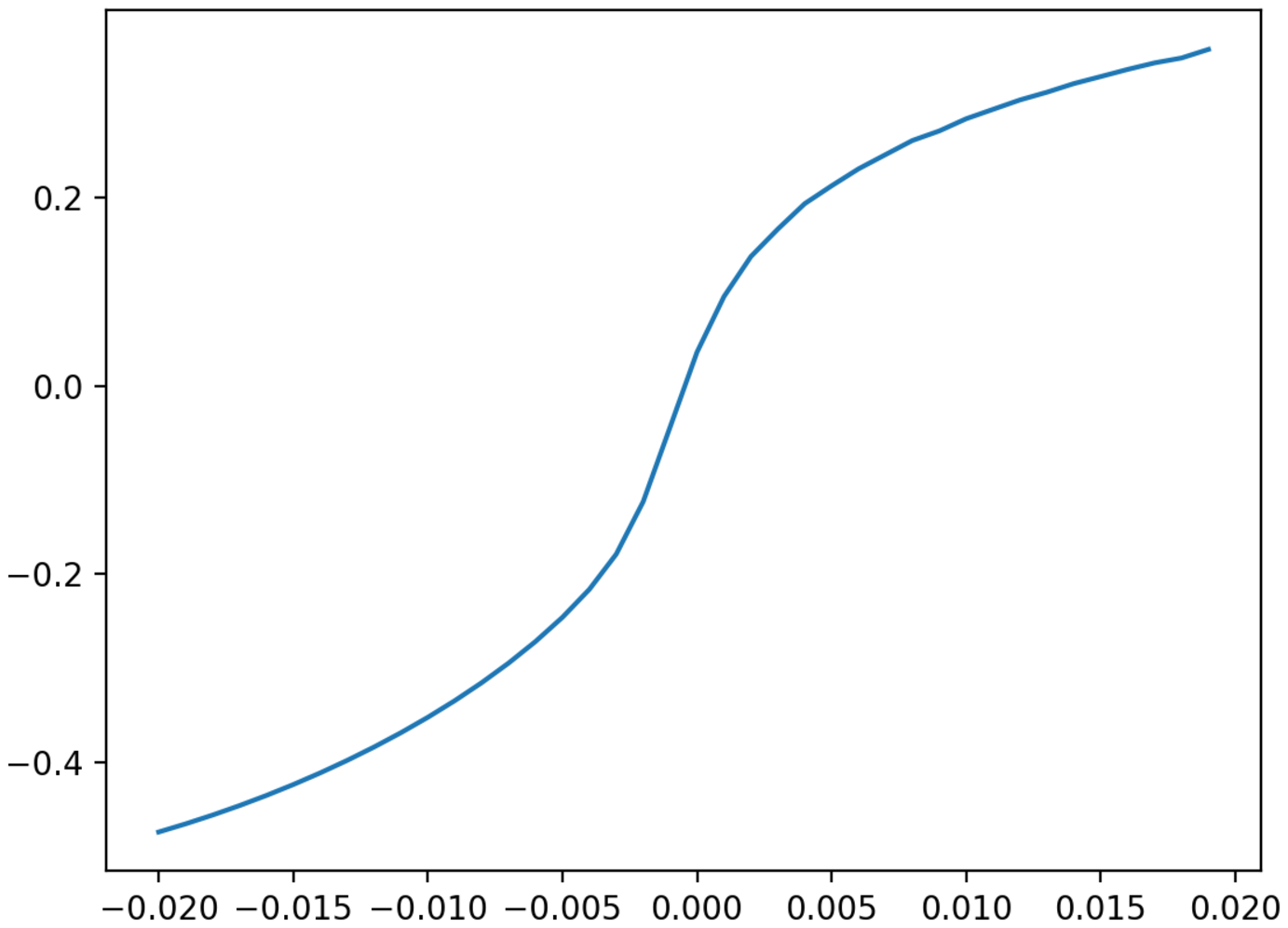}
      \end{minipage} &
      \begin{minipage}[t]{0.45\hsize}
        \centering
        \includegraphics[keepaspectratio, scale=0.35]{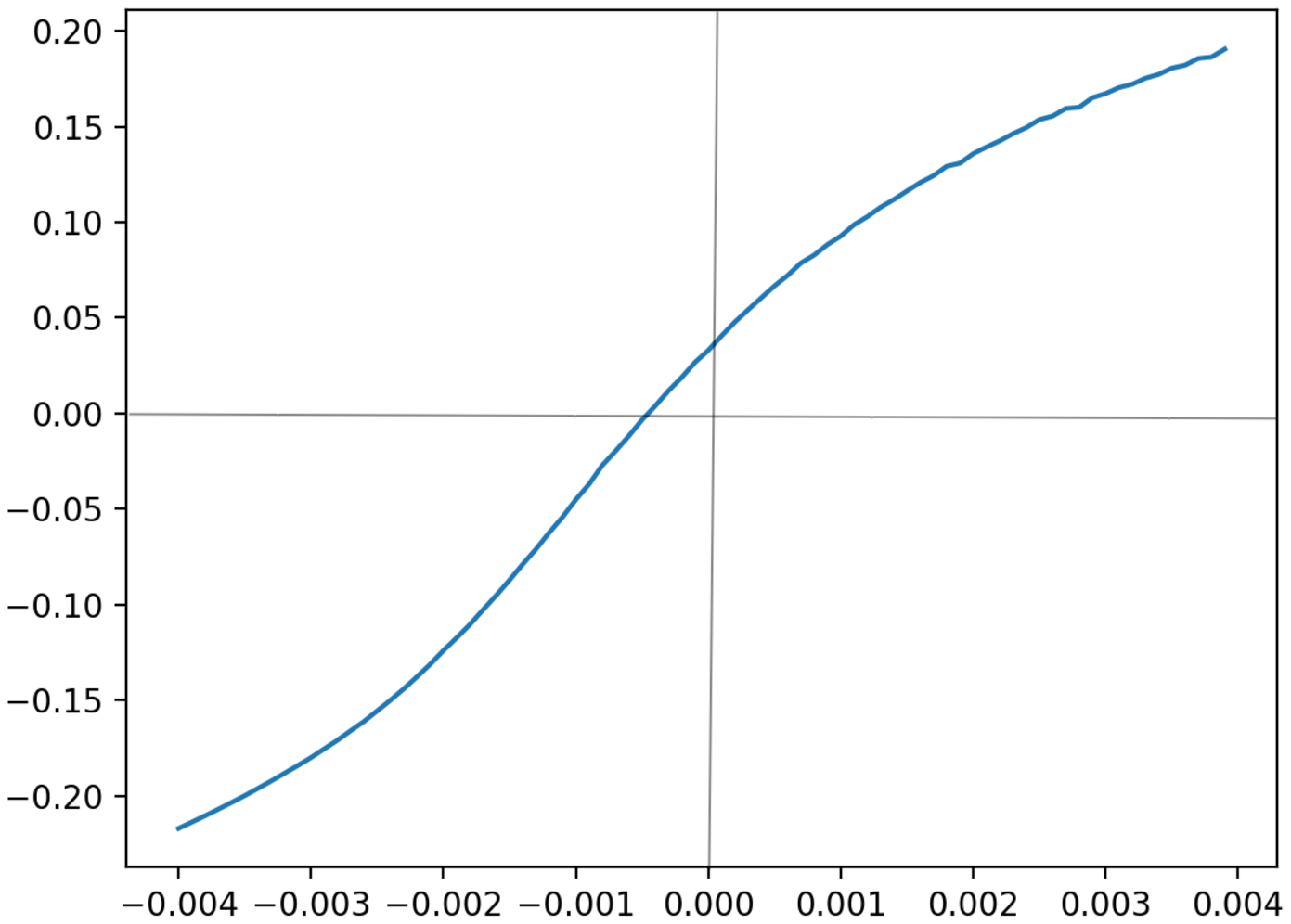}
      \end{minipage}
    \end{tabular}
   
    \caption{The central Lyapunov exponent of the SRB measure as a function of  the parameters $a$ close to $0$.  We draw the graph with the domain $[-0.02,0.02]$ and  $[-0.004,0.004]$ respectively  on the left and right pictures.}
    \label{fig1}
    \end{figure}
    
  \begin{figure}[t]
    \begin{tabular}{cc}
      \begin{minipage}[t]{0.45\hsize}
    \includegraphics[keepaspectratio, scale=0.35]
    {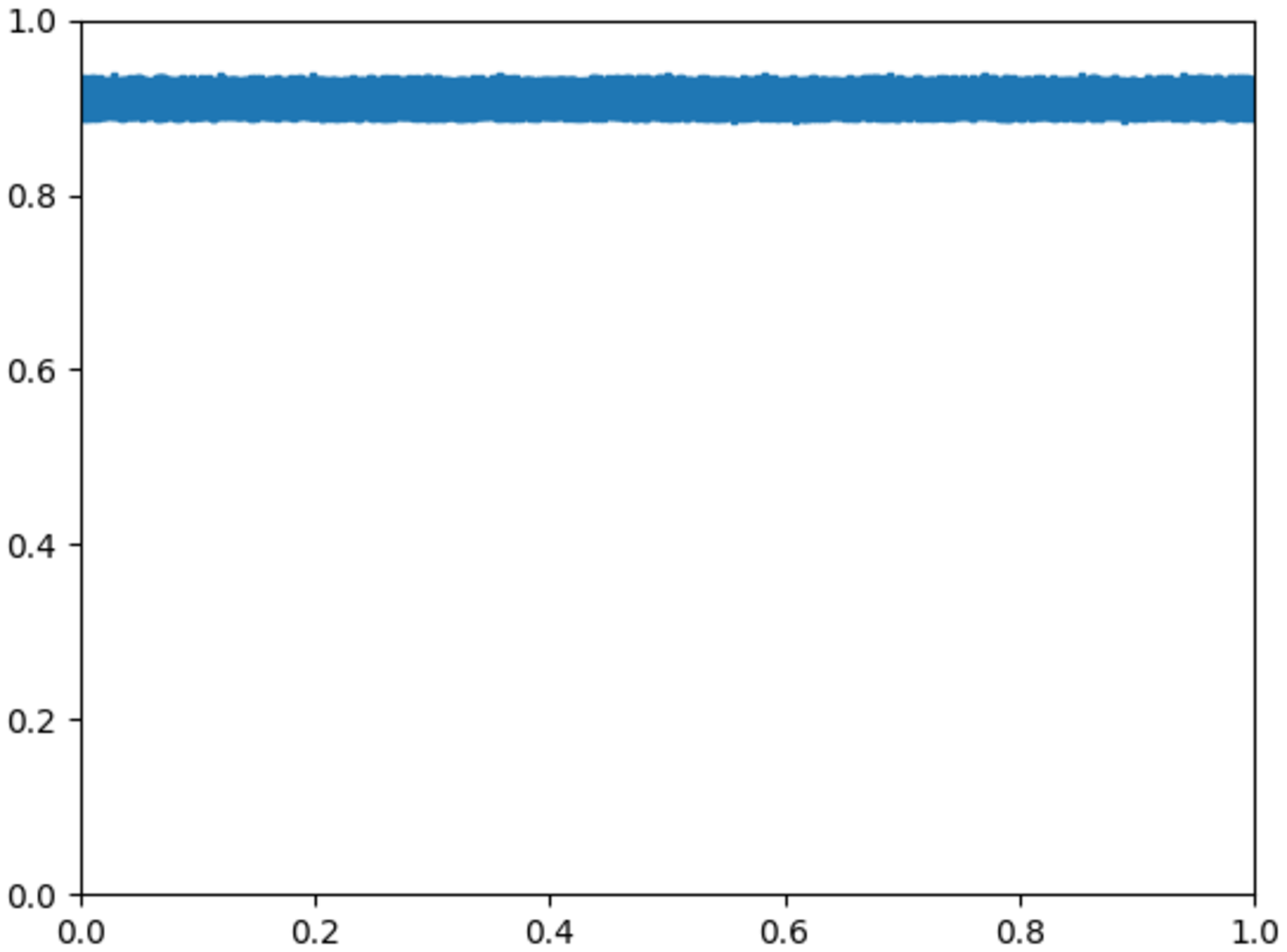}
        \caption*{$a=-0.02$}
      \end{minipage} &
      \begin{minipage}[t]{0.45\hsize}
        \centering
       \includegraphics[keepaspectratio, scale=0.35]{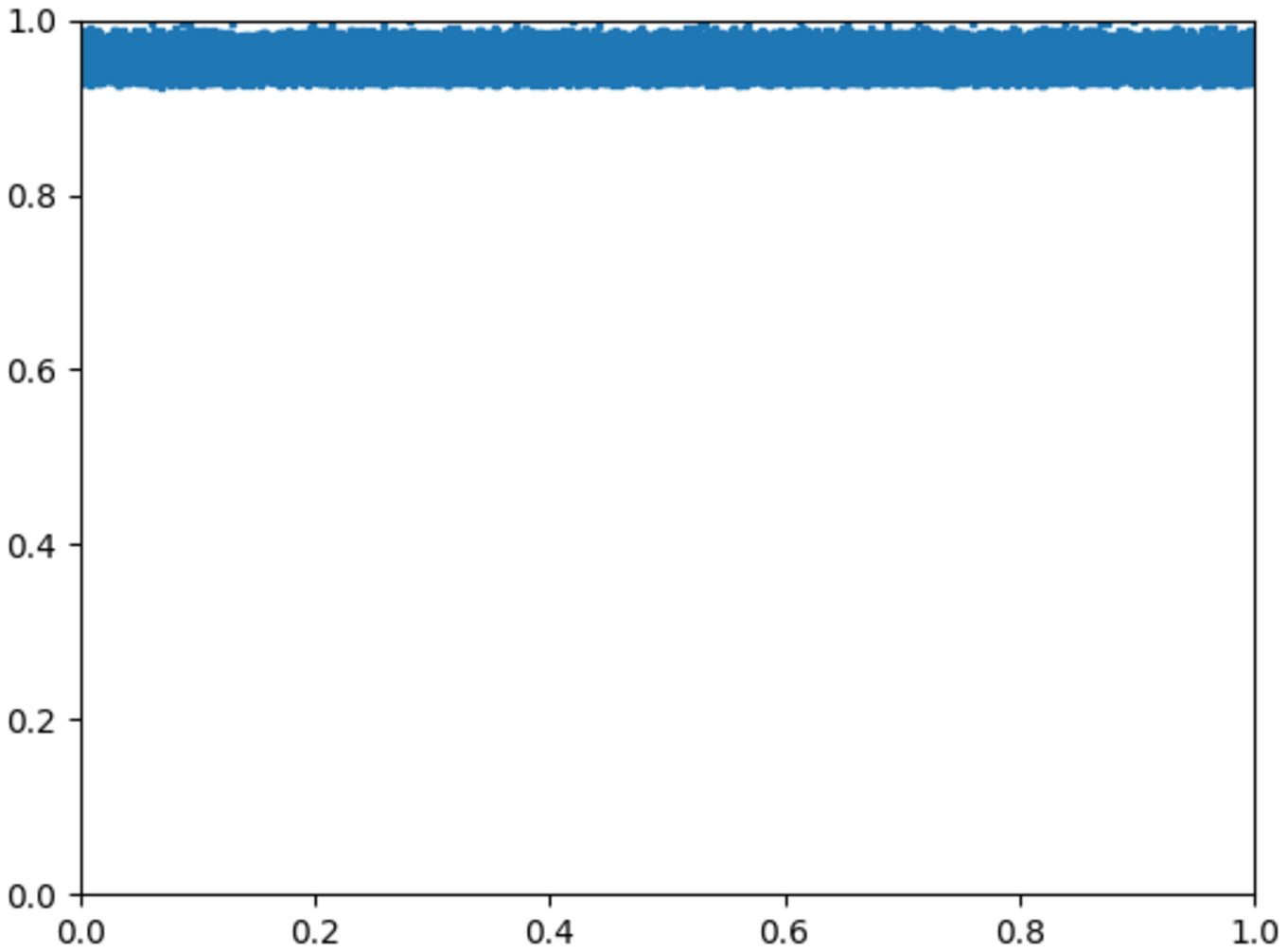}
        \caption*{$a=-0.006$}
      \end{minipage}\\
      \begin{minipage}[t]{0.45\hsize}
        \centering
        \includegraphics[keepaspectratio, scale=0.35]{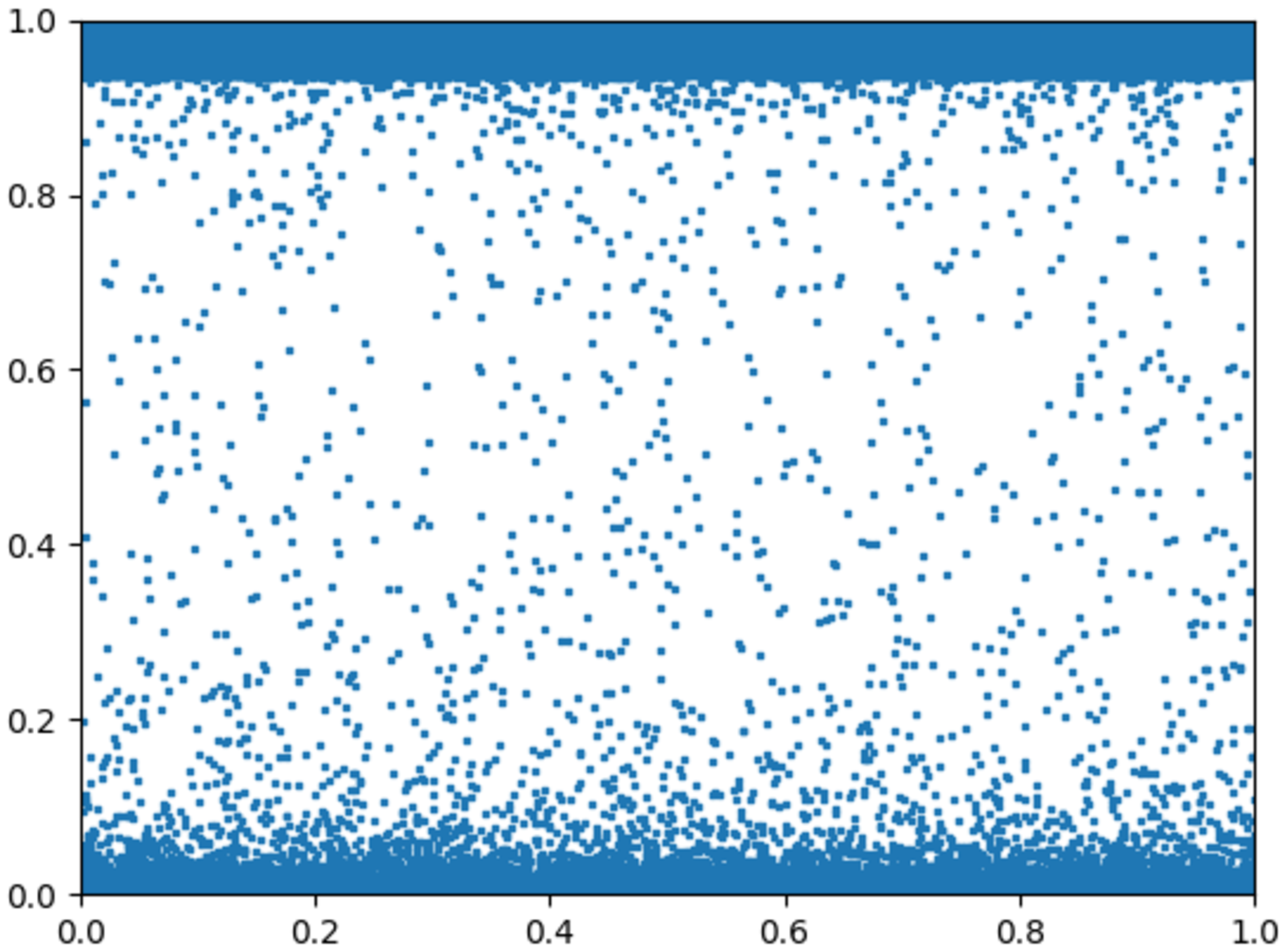}
        \caption*{$a=-0.003$}
      \end{minipage} &
      \begin{minipage}[t]{0.45\hsize}
        \centering
        \includegraphics[keepaspectratio, scale=0.35]{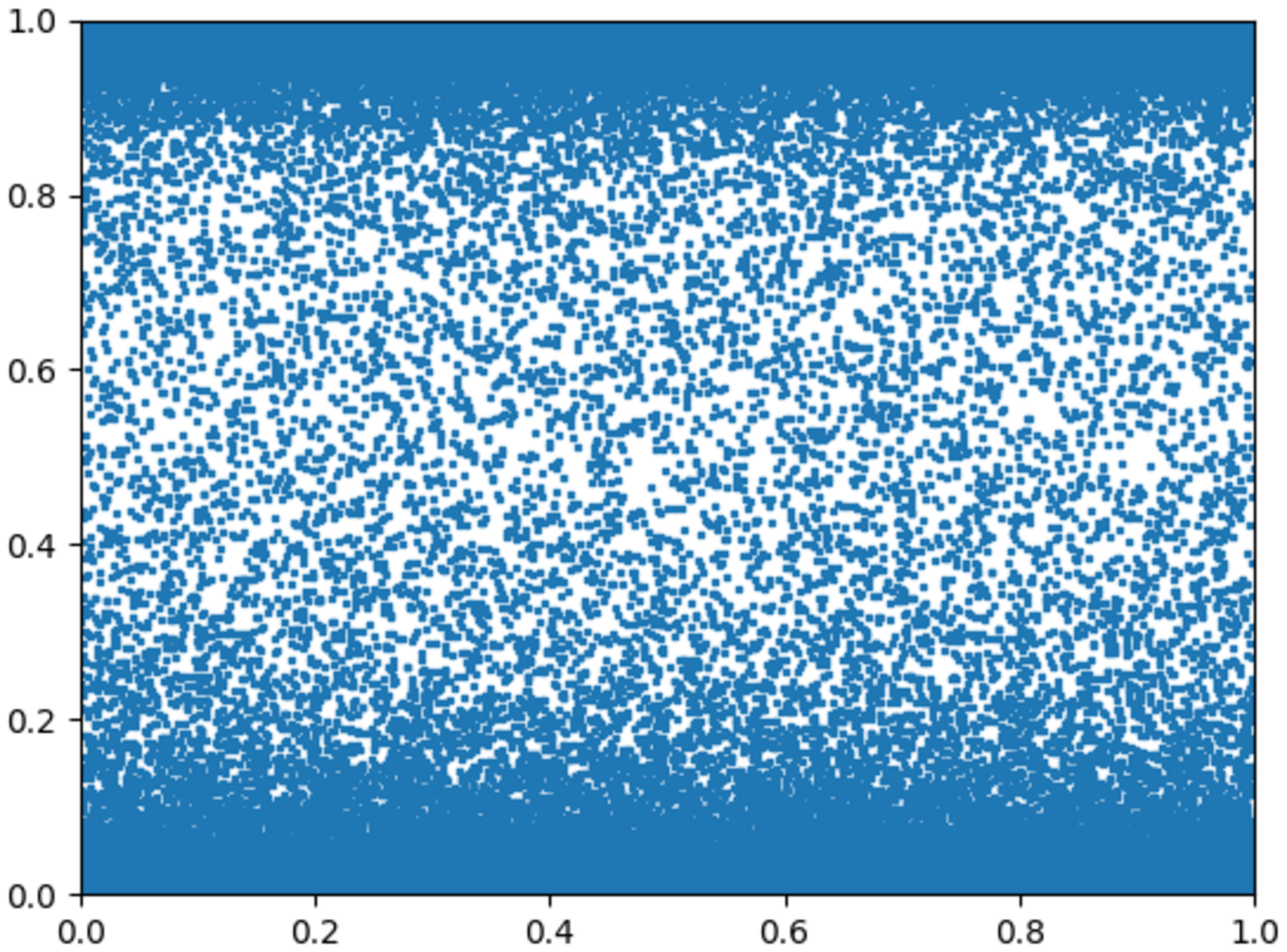}
        \caption*{$a=-0.002$}
      \end{minipage}

    \end{tabular}
    \caption{Plots of orbits of $F_a$ at a few values of the parameter $a$}
    \label{fig2}
  \end{figure}

\bibliography{mybib.bib}
\bibliographystyle{plain}
\end{document}